\theoremstyle{plain}
\newtheorem{theorem}{Theorem}[section]
\newtheorem{proposition}[theorem]{Proposition}
\theoremstyle{definition}
\newtheorem{definition}[theorem]{Definition}
\newtheorem{example}[theorem]{Example}
\theoremstyle{remark}
\newtheorem{remark}[theorem]{Remark}
\title{\textbf{On $\mathscr{M}$-arrangements of conics and lines with ordinary singularities}}
\author{
  Marek Janasz and Piotr Pokora
}
\date{\today}
\begin{document}
\maketitle

\begin{abstract}
In this paper we study combinatorial aspects of reduced plane curves known as $\mathscr{M}$-curves. This notion is a natural generalization of maximizing plane curves, which are well-known in the theory of algebraic surfaces. We focus on $\mathscr{M}$-arrangements of conics and lines with ordinary singularities of multiplicity at most four. We provide numerical constraints on their existence, especially in terms of weak combinatorics, and then study in detail the case of arrangements consisting of one conic and lines. We also construct a new example with one conic and eleven lines, prove boundedness results for real arrangements of this type, and record a regularity consequence for the associated Milnor algebra and module of Jacobian syzygies.
\end{abstract}

\section{Introduction}\label{s:introduction}

In the present paper we study a relatively new class of plane curves, called
\(\mathscr{M}\)-curves. These curves were recently introduced in \cite{JanLes} as a
natural generalization of the classical maximizing curves introduced by Persson
\cite{Persson}. The theory of maximizing curves of even degree is well-established,
and several interesting constructions of such curves are known. Maximizing plane
curves admit only \({\rm ADE}\)-singularities and are free curves whose Jacobian
schemes have the largest possible degree. These conditions are rather restrictive,
which makes explicit constructions difficult. Recently, Dimca and Pokora introduced
maximizing curves of odd degree in \cite{DimPok}. Such curves seem to be extremely
rare; at present, only a few examples are known and only in low degree. This motivates the
study of a slightly larger class of reduced plane curves which still shares the key
numerical and homological properties of maximizing curves. This is the point of view
behind the notion of \(\mathscr{M}\)-curves.

Let us recall the definition in the form needed below; see
\cite[Definitions 4.1 and 4.7]{JanLes}.

\begin{definition}
Let \(C=\{f=0\}\subset \mathbb{P}^{2}_{\mathbb C}\) be a reduced curve of degree
\(d\geq 5\) admitting only \({\rm ADE}\)-singularities and simple elliptic singularities,
that is, ordinary quadruple points and \(J_{2,0}\)-singularities. Denote by \(J_f\) the
Jacobian ideal generated by the partial derivatives of \(f\). We say that \(C\) is an
\(\mathscr{M}\)-curve if
\[
{\rm deg}\,J_f=
\begin{cases}
3m^{2}-3m+3 & \text{if } d=2m\geq 6,\\
3m^{2}+1 & \text{if } d=2m+1\geq 5.
\end{cases}
\]
\end{definition}

The notion of an \(\mathscr{M}\)-curve is meaningful only when the curve has at least
one simple elliptic singularity, and this will be our standing assumption throughout
the paper.

Our goal is to investigate combinatorial properties of \(\mathscr{M}\)-curves in the
special but natural setting of conic-line arrangements with only ordinary double,
triple and quadruple points. The first steps toward understanding
\(\mathscr{M}\)-curves among line arrangements were taken in \cite{JanLes05}. For
instance, every real \(\mathscr{M}\)-arrangement of lines with only double, triple and
quadruple points is simplicial; see \cite{JanLes05}. This non-trivial and somewhat
unexpected fact shows that \(\mathscr{M}\)-curves form a restrictive and delicate class.

Let us also mention two related viewpoints in which Jacobian syzygies play a central role.
The first one comes from the theory of algebraic curves invariant under polynomial
differential equations. If \(C=\{f=0\}\subset \mathbb P^2\) is invariant under a polynomial
vector field, then the associated logarithmic derivation gives a non-trivial relation among
the partial derivatives of \(f\). Thus invariant arrangements naturally lead to Jacobian
syzygies of controlled degree, which are precisely the objects entering the study of
\({\rm mdr}(f)\), freeness and related properties of plane curves. In this direction,
de Moura Canaan and Coutinho \cite{CC} classified arrangements of ten real projective
lines invariant under polynomial differential equations of degree four.

A second related direction concerns conic-line arrangements with very low minimal degree
of Jacobian syzygies. Beorchia and Mir\'o-Roig in \cite{BM} classified arrangements with a
linear Jacobian syzygy, that is, with \({\rm mdr}(f)=1\), attaining the du Plessis--Wall
bounds for the global Tjurina number. Their configurations arise from special pencils of
conics, such as hyperosculating or bitangent pencils, and typically have non-ordinary
singularities, for instance tacnodes or \(A_7\)-singularities. These works are therefore
complementary to the present paper: our focus is on conic-line arrangements with only
ordinary double, triple and quadruple points satisfying the numerical conditions defining
\(\mathscr{M}\)-curves, and on the resulting combinatorial and freeness consequences,
especially after deleting the distinguished conic.

It is also worth emphasizing that arrangements of conics and lines arise naturally in
constructions of algebraic surfaces with extremal properties; see, for instance,
\cite{NP26, Pard, Pelka}. This gives another motivation for studying this class of curves.

We now summarize the main results of the paper. After recalling the necessary
preliminaries on free plane curves and \(\mathscr{M}\)-curves, we prove a numerical
criterion for the weak combinatorics of \(\mathscr{M}\)-arrangements of conics and
lines with ordinary singularities of multiplicity at most four; see Theorem
\ref{char}. We then specialize to arrangements consisting of one smooth conic and
\(d\) lines. In this case we derive explicit constraints on the numbers of double,
triple and quadruple points, and we study the number of singular points lying on the
distinguished conic. These restrictions lead to a precise description of the possible
combinatorial Poincar\'e polynomials of the line arrangements obtained after deleting
the conic; see Theorem \ref{poil}.

Next, we construct an explicit \(\mathscr{M}\)-arrangement consisting of one conic and
eleven lines, apparently new, whose deletion is a free arrangement of eleven lines. We
also prove that real \(\mathscr{M}\)-arrangements consisting of one conic and lines are
bounded in degree. Finally, we record a consequence for the Castelnuovo--Mumford
regularity of the Milnor algebra and the module of Jacobian syzygies associated with
an \(\mathscr{M}\)-curve.

\section{Preliminaries}
In this paper we adopt the notation from \cite{Dimca}. Let $S = \mathbb{C}[x,y,z]$ be the coordinate ring of $\mathbb{P}^{2}_{\mathbb{C}}$. For a homogeneous polynomial $f \in S$ we denote by $J_{f}$ the Jacobian ideal associated with $f$, i.e., the ideal of the form $J_{f} = \langle \partial_{x}\, f, \partial_{y} \, f, \partial_{z} \, f \rangle$. We define the Milnor algebra associated to $f$ as $M(f) = S/ J_{f}$. 

We will need an important invariant that is defined in the language of the syzygies of $J_{f}$.
\begin{definition}
Consider the graded $S$-module of Jacobian syzygies of $f$, namely $$AR(f)=\{(a,b,c)\in S^3 : a\partial_{x} \, f + b \partial_{y} \, f + c \partial_{z} \, f = 0 \}.$$
The minimal degree of non-trivial Jacobian relations for $f$ is defined
$${\rm mdr}(f):=\min_{r\geq 0}\{AR(f)_r\neq 0\}.$$ 
\end{definition}
\begin{remark}
If $C = \{f=0\} \subset \mathbb{P}^{2}_{\mathbb{C}}$ is a reduced curve then we write ${\rm mdr}(f)$ or ${\rm mdr}(C)$ interchangeably.
\end{remark}
Let us now formally define the freeness property for reduced plane curves.
\begin{definition}
A reduced curve $C \subset \mathbb{P}^{2}_{\mathbb{C}}$ of degree $d$ is free if the Jacobian ideal $J_{f}$ is saturated with respect to $\mathfrak{m} = \langle x,y,z\rangle$. Moreover, if $C$ is free, then the pair $(d_{1}, d_{2}) = ({\rm mdr}(f), d - 1 - {\rm mdr}(f))$ is called the exponents of $C$, i.e., the degrees of generators of ${\rm AR}(f)$.
\end{definition}
\begin{remark}
In the scenario where $C = \{f=0\}\subset \mathbb{P}^{2}_{\mathbb{C}}$ is free, generators of ${\rm AR}(f) $ are not uniquely determined, contrary to the degree of these generators. 
\end{remark}
\begin{remark}
It is worth recalling here that $\mathscr{M}$-curves are free, see \cite{JanLes} for details.
\end{remark}

In order to check whether a given plane curve $C\subset \mathbb{P}^{2}_{\mathbb{C}}$ is free we can use the following criterion that comes from~\cite{duP}. Let us recall that for a reduced curve $C = \{f=0\}\subset \mathbb{P}^{2}_{\mathbb{C}}$ one has
$${\rm deg} \, J_{f} = \tau(C) = \sum_{p \in {\rm Sing}(C)} \tau_{p}$$
with $\tau_{p}$ being the Tjurina number of a singular point $p \in C$.
\begin{theorem}[du-Plessis -- Wall]
\label{dup}
Let $C = \{f=0\}\subset \mathbb{P}^{2}_{\mathbb{C}}$ be a reduced curve of degree $d$ and let $r = {\rm mdr}(f)$. Let us denote the total Tjurina number of $C$ by $\tau(C)$.
Then the following two cases hold.
\begin{enumerate}
\item[a)] If $r < d/2$, then $\tau(C) \leq \tau_{max}(d,r)= (d-1)(d-r-1)+r^2$ and the equality holds if and only if the curve $C$ is free.
\item[b)] If $d/2 \leq r \leq d-1$, then
$\tau(C) \leq \tau_{max}(d,r)$,
where, in this case, we set
$$\tau_{max}(d,r)=(d-1)(d-r-1)+r^2- \binom{2r-d+2}{2}.$$
\end{enumerate}
\end{theorem}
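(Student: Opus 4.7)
The plan is to bound $\tau(C)=\deg J_{f}$ by controlling the Hilbert series of the Jacobian syzygy module $AR(f)$ via the defining exact sequence of graded $S$-modules
$$0 \to AR(f)(1-d) \to S(1-d)^{3} \xrightarrow{(f_{x},f_{y},f_{z})} S \to S/J_{f} \to 0.$$
Since $C$ is reduced, the singular locus is zero-dimensional, so the Hilbert polynomial of $M(f)=S/J_{f}$ equals the constant $\tau(C)$; bounding $\tau(C)$ is therefore equivalent to controlling the graded pieces of $AR(f)$ in low degrees against its forced growth in high degrees.

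The central step is to exploit a minimal nonzero syzygy $\rho\in AR(f)_{r}$. Together with the three Koszul syzygies $(f_{y},-f_{x},0)$, $(f_{z},0,-f_{x})$, $(0,f_{z},-f_{y})$ of degree $d-1$, $\rho$ generates a submodule $AR(f)'\subset AR(f)$ whose minimal free resolution can be written down explicitly. Comparing the Hilbert series of $AR(f)'$ with that forced by the exact sequence above yields $\tau(C)\leq \tau_{\max}(d,r)$. Equality in case (a) forces $AR(f)=AR(f)'$, and after modding out the Koszul triple (which is always present) one sees that $AR(f)$ is in fact free with generators in degrees $r$ and $d-1-r$; this is precisely the freeness of $C$ with the claimed exponents.

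The dichotomy between (a) and (b) comes from the interaction of $\rho$ with the Koszul syzygies. When $r<d/2$, the Koszul relations in degree $d-1$ are linearly independent from $S_{d-1-r}\cdot\rho$, and the Hilbert-series calculation goes through unchanged. When $d/2\leq r\leq d-1$, however, products in $S_{2r-d+2}\cdot\rho$ already lie inside the submodule spanned by the Koszul syzygies, producing $\binom{2r-d+2}{2}$ dependent relations that must be subtracted; this is precisely the origin of the correction term in the formula for $\tau_{\max}(d,r)$.

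The main obstacle is the careful bookkeeping in case (b): one must identify exactly which products of $\rho$ land in the Koszul submodule and rule out any further unexpected low-degree syzygies. A convenient sanity check is to sheafify $AR(f)$ to a reflexive rank-$2$ sheaf on $\mathbb{P}^{2}_{\mathbb{C}}$, apply Grothendieck's splitting on a generic line, and reinterpret the bound as a Chern-class computation; from that viewpoint the binomial correction is transparent as the obstruction to a balanced splitting, and it provides a geometric cross-check against the purely algebraic argument.
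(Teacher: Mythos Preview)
The paper does not prove this theorem at all: it is quoted verbatim from du~Plessis--Wall \cite{duP} and used as a black box in the subsequent remarks. There is therefore nothing in the paper to compare your argument against; any proof you supply is necessarily ``a different route'' because the paper's route is a citation.

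As for the content of your sketch, the overall architecture --- control $\tau(C)$ through the Hilbert series of $AR(f)$ via the defining four-term exact sequence, and feed in a minimal syzygy $\rho$ of degree $r$ --- is indeed how modern reproofs (e.g.\ those of Dimca and of Ellia) proceed, and the equality clause in~(a) is correctly identified with $AR(f)$ being free on two generators of degrees $r$ and $d-1-r$. However, your account of the correction term in~(b) is not the actual mechanism. The products $S_{2r-d+2}\cdot\rho$ do \emph{not} ``lie inside the Koszul submodule''; there is no reason a generic multiple of a minimal syzygy should be a Koszul relation. The source of the $\binom{2r-d+2}{2}$ is rather that attaining $(d-1)(d-r-1)+r^{2}$ would force a \emph{second} generator of $AR(f)$ in degree $d-1-r<r$, contradicting the minimality of $r$; the binomial measures the forced defect in the Hilbert function once such a low-degree second generator is ruled out (equivalently, it is $\dim_{\mathbb{C}} S_{2r-d}$, the space of ``phantom'' relations that would have to exist for the na\"ive count to be attained). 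Your Chern-class remark at the end is a legitimate heuristic, but it does not by itself supply the missing inequality. If you want a self-contained argument, the cleanest route is the one in Dimca's lecture notes: write down the minimal free resolution of $M(f)$ and read off the Hilbert polynomial directly.
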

From now on we are working with conic-line arrangements $\mathcal{CL}\subset \mathbb{P}^{2}_{\mathbb{C}}$ that admit only ordinary quasi-homogeneous singularities. In particular, all ordinary singularities of multiplicity at most $4$ are quasi-homogeneous \cite[Exercise 7.31]{RCS}.
\begin{remark}
Since we will only work with curves admitting ordinary quasi-homogeneous singularities, the Tjurina numbers of singular points are equal to the Milnor numbers \cite{Reiffen}. Hence, we have the following chain of equalities:
$$\tau(C) = \sum_{p \in {\rm Sing}(C)} \tau_{p} = \sum_{p \in {\rm Sing}(C)} \mu_{p} = \sum_{r\geq 2}(r-1)^{2}n_{r},$$
where $n_{r}$ denotes the number of $r$-fold intersection points, i.e., points in the plane where exactly $r$ curves meet, and $\mu_p$ denotes the Milnor number of a singular point $p \in C$.
\end{remark}
\begin{remark}
In the light of the above remark, if $C=\{f=0\}\subset \mathbb{P}^{2}_{\mathbb{C}}$ is a reduced curve of degree $d$ admitting only ordinary quasi-homogeneous singularities, then $C$ is free if and only if $r={\rm mdr}(f) \leq (d-1)/2$ and 
\begin{equation}
\label{DPW}
(d-1)(d-r-1)+r^2 = \sum_{r\geq 2}(r-1)^{2}n_{r}.
\end{equation}
\end{remark}
In this paper, we use the concept of weak combinatorics, which can be defined as data attached to a given conic-line arrangement, represented by a vector of the form:  
$$C(d,k;n_2, \ldots, n_t).$$ Here, $d$ denotes the number of lines, $k$ denotes the number of smooth conics, $n_r$ denotes, as above, the number of $r$-fold ordinary intersections, and $t$ is equal to the maximal multiplicity among singular points $p \in {\rm Sing}(\mathcal{CL})$.

\section[Constraints on weak combinatorics of M-arrangements of conics and lines]{Constraints on weak combinatorics of $\mathscr{M}$-arrangements of conics and lines}
We present our main combinatorial description of $\mathscr{M}$-arrangements of conics and lines with ordinary singularities.
\begin{theorem}
\label{char}
Let $\mathcal{CL} \subset \mathbb{P}^{2}_{\mathbb{C}}$ be an arrangement of $k\geq 1$ conics and $d\geq 3$ lines with ordinary singularities of multiplicity at most $4$. Assume that $\mathcal{CL}$ is an $\mathscr{M}$-arrangement.
\begin{itemize}
    \item[(i)] If $d=2\ell$ for some $\ell\geq 2$, then 
    $$n_{2}+2n_{3}+3n_{4} = (k+\ell)^{2}+\ell-k-3.$$
    \item[(ii)] If $d=2\ell+1$ for some $\ell \geq 1$, then
    $$n_{2}+2n_{3}+3n_{4} = (k+\ell)^{2}+2\ell - 1.$$
\end{itemize}
\end{theorem}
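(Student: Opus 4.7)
My plan is to reduce the statement to the combination of three ingredients: the $\mathscr{M}$-curve hypothesis (which pins down the total Tjurina number), the quasi-homogeneity of ordinary singularities of multiplicity $\le 4$ (which converts Tjurina numbers into a concrete quadratic expression in $n_2,n_3,n_4$), and Bezout's theorem applied pairwise to the components of $\mathcal{CL}$ (which yields a second linear relation in $n_2,n_3,n_4$). The quantity $n_2+2n_3+3n_4$ that appears in the target formula is obtained as a specific linear combination of these two relations, so the proof will essentially be an algebraic identity.

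More precisely, I would first set $D = 2k+d$ and read off from the definition of $\mathscr{M}$-curve the value of $\tau(\mathcal{CL})=\deg J_f$: it equals $3(k+\ell)^2 - 3(k+\ell)+3$ in case (i) (where $D=2(k+\ell)$) and $3(k+\ell)^2+1$ in case (ii) (where $D=2(k+\ell)+1$). By the remark following Theorem \ref{dup}, since all singularities of $\mathcal{CL}$ are ordinary of multiplicity $\le 4$, hence quasi-homogeneous, one has $\tau(\mathcal{CL}) = n_2 + 4n_3 + 9n_4$. This gives the first equation.

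For the second equation, I would invoke Bezout's theorem: for each pair among the $k$ conics and $d$ lines, the number of common points (counted with the appropriate multiplicity for transverse intersection at an $r$-fold ordinary point, namely $\binom{r}{2}$) equals the product of the degrees. Summing over all pairs of components gives
\[
n_2 + 3n_3 + 6n_4 \;=\; 4\binom{k}{2} + 2kd + \binom{d}{2}.
\]
Noting the identity $2(n_2+3n_3+6n_4) - (n_2+4n_3+9n_4) = n_2+2n_3+3n_4$, I obtain
\[
n_2 + 2n_3 + 3n_4 \;=\; 8\binom{k}{2} + 4kd + 2\binom{d}{2} - \tau(\mathcal{CL}).
\]
Plugging in $d=2\ell$ (resp.\ $d=2\ell+1$) and the corresponding value of $\tau(\mathcal{CL})$ produces the claimed closed forms after routine simplification. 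The computations will collapse to $(k+\ell)^2+\ell-k-3$ in case (i) and $(k+\ell)^2+2\ell-1$ in case (ii).

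There is no real obstacle here; the only place requiring a moment of care is writing down the Bezout identity so that the multiplicities on the left-hand side are exactly $\binom{r}{2}$ (this is automatic because the singularities are ordinary, so the local intersection multiplicity of any two smooth branches meeting at a point is $1$). Everything else is bookkeeping and arithmetic.
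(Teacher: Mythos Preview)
Your proposal is correct and follows essentially the same route as the paper: compute $\tau(\mathcal{CL})$ from the $\mathscr{M}$-curve hypothesis, express it as $n_2+4n_3+9n_4$ via quasi-homogeneity, obtain $n_2+3n_3+6n_4$ from the pairwise B\'ezout count, and take the linear combination $2\,(\text{B\'ezout}) - \tau$ to isolate $n_2+2n_3+3n_4$. The only cosmetic difference is that the paper phrases the combination as $2n_2+6n_3+12n_4=\tau(\mathcal{CL})+(n_2+2n_3+3n_4)$ rather than your explicit subtraction, but the arithmetic is identical.
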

\begin{proof}
First, let us consider the case $d=2\ell \geq 4$. Recall that by \cite[Theorem 4.3]{JanLes} we have
$$\tau(\mathcal{CL})=n_{2}+4n_{3}+9n_{4}=3(k+\ell)^2 - 3(k+\ell) +3.$$ 
After applying B\'ezout's Theorem to $\mathcal{CL}$, we get the following na\"ive combinatorial count:
\begin{equation}
\label{naive1}
4\cdot \binom{k}{2} + 4k\ell + \binom{2\ell}{2} = n_{2} + 3n_{3} + 6n_{4},
\end{equation}
Let us rewrite this count as follows:
$$4(k^{2}-k) +8k\ell + 2\ell(2\ell-1)= 2n_{2}+6n_{3}+12n_{4} = \tau(\mathcal{CL}) + n_{2}+2n_{3}+3n_{4}.$$
We get
\begin{multline*}
4(k^{2}-k) +8k\ell + 2\ell(2\ell-1) - \tau(\mathcal{CL}) = \\ 4(k^{2}-k) +8k\ell + 2\ell(2\ell-1) - 3(k+\ell)^2 + 3(k+\ell) -3  = (k+\ell)^{2}+\ell - k - 3,
\end{multline*}
and hence
$$(k+\ell)^{2}+\ell - k - 3= n_{2}+2n_{3}+3n_{4},$$
which completes the proof of the first case.

\noindent
Now we pass to the second case, where $d=2\ell + 1 \geq 3$. We recall that by \cite[Theorem 4.8]{JanLes} one has
$$\tau(\mathcal{CL})=n_{2}+4n_{3}+9n_{4}=3(k+\ell)^2 + 1.$$ 
We use again B\'ezout's Theorem applied to $\mathcal{CL}$ obtaining the na\"ive combinatorial count:
\begin{equation}
4\cdot \binom{k}{2} + 2k(2\ell+1) + \binom{2\ell+1}{2} = n_{2} + 3n_{3} + 6n_{4},
\end{equation}
and we get
$$4k^{2}+8k\ell+4\ell^{2}+2\ell = \tau(\mathcal{CL}) + n_{2}+2n_{3}+3n_{4}.$$
After some manipulations we finally arrive at
$$(k+\ell)^{2}+2\ell - 1 = n_{2}+2n_{3}+3n_{4},$$
which completes the proof.
\end{proof}
\begin{example}
Consider the following combinatorics $C(d,k;n_{2},n_{3},n_{4}) = (7,1;5,4,3)$. Using Theorem \ref{char} $(ii)$ with $\ell=3$, we see that
$$22= 5 + 4\cdot2 + 3\cdot 3 = n_{2} +2n_{3}+3n_{4} \neq (k+\ell)^2 + 2\ell - 1 = 4^2+6 - 1 = 21,$$
hence this combinatorics cannot lead to an $\mathscr{M}$-curve.
\end{example}
\begin{example}
Let us consider $\mathcal{CL}\subset \mathbb{P}^{2}_{\mathbb{C}}$ given by the following defining polynomial
$$f(x,y,z)=xy(x+y-z)(x^2 - xz + 5y^2 - 5yz)(x^2+2y^2 - xz - 2yz).$$
The arrangement $\mathcal{CL}$ has weak combinatorics $C(3,2;1,0,3)$ and it is known to be free with exponents $(d_{1},d_{2})=(2,4)$, see \cite[Example 2.1]{ST}, and hence $\mathcal{CL}$ is an $\mathscr{M}$-arrangement. Since $\ell=1$ we can check that
$$10 = n_{2}+2n_{3}+3n_{4} = (k+\ell)^{2}+2\ell-1 = 3^2+2-1 = 10,$$
which justifies Theorem \ref{char} $(ii)$.
\end{example}
Now we would like to study a special case, namely we consider $\mathscr{M}$-arrangements consisting of $d$ lines and just one conic. Our result provides a complete set of constraints that allow us to find all weak combinatorics of such arrangements.
\begin{proposition}
\label{kon}
Let $\mathcal{CL} \subset \mathbb{P}^{2}_\mathbb{C}$ be an $\mathscr{M}$-arrangement of $d\geq 3$ lines and one conic with only ordinary singularities of multiplicity at most $4$.
\begin{itemize}
    \item[(i)] If $d=2\ell\geq 4$, then $n_{2}+n_{3}=3\ell-6$ and $n_{3}+3n_{4} = \ell^{2}+3$.
    \item[(ii)] If $d=2\ell+1\geq 3$, then $n_{2}+n_{3}=3\ell-2$ and $n_{3}+3n_{4}=\ell^{2}+\ell+2$.
\end{itemize}
\end{proposition}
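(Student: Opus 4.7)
The plan is to combine two linear relations in $n_2,n_3,n_4$ that both arise from the $\mathscr{M}$-hypothesis. The first relation is Theorem \ref{char} specialized to $k=1$: in case $(i)$ it gives $n_2+2n_3+3n_4=(\ell+1)^2+\ell-4=\ell^2+3\ell-3$, and in case $(ii)$ it gives $n_2+2n_3+3n_4=(\ell+1)^2+2\ell-1=\ell^2+4\ell$. The second relation is B\'ezout's naive count already used inside the proof of Theorem \ref{char}; this is the same identity
$$4\binom{k}{2}+4k\ell+\binom{2\ell}{2}=n_2+3n_3+6n_4 \quad\text{or}\quad 4\binom{k}{2}+2k(2\ell+1)+\binom{2\ell+1}{2}=n_2+3n_3+6n_4,$$
which with $k=1$ collapses to $n_2+3n_3+6n_4=2\ell^2+3\ell$ in case $(i)$ and to $n_2+3n_3+6n_4=2\ell^2+5\ell+2$ in case $(ii)$.

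Next, I would simply subtract the $\mathscr{M}$-identity from the B\'ezout identity in each parity case. In case $(i)$, $(n_2+3n_3+6n_4)-(n_2+2n_3+3n_4)=n_3+3n_4$ equals $(2\ell^2+3\ell)-(\ell^2+3\ell-3)=\ell^2+3$, giving the second claimed identity. Feeding this back into $n_2+2n_3+3n_4=\ell^2+3\ell-3$ via $n_2+2n_3+3n_4=(n_2+n_3)+(n_3+3n_4)$ yields $n_2+n_3=(\ell^2+3\ell-3)-(\ell^2+3)=3\ell-6$. Case $(ii)$ is entirely analogous: the subtraction gives $n_3+3n_4=(2\ell^2+5\ell+2)-(\ell^2+4\ell)=\ell^2+\ell+2$, and then $n_2+n_3=(\ell^2+4\ell)-(\ell^2+\ell+2)=3\ell-2$.

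There is no real obstacle here: everything is already linearly encoded in Theorem \ref{char} together with the B\'ezout count from its proof, and the proof is pure bookkeeping once we substitute $k=1$. If anything needs care, it is only the small-$\ell$ boundary (e.g.\ $\ell=2$ in case $(i)$ forces $n_2+n_3=0$ and $\ell=1$ in case $(ii)$ is consistent with the worked example $C(3,2;1,0,3)$ being explicitly excluded here since $k=1$), but this does not affect the derivation, only the realizability discussion that comes afterwards.
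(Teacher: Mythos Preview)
Your proof is correct and follows essentially the same route as the paper: specialize Theorem~\ref{char} to $k=1$, pair it with the B\'ezout count (equation~\eqref{naive1} and its odd analogue) also specialized to $k=1$, and take linear combinations to isolate $n_3+3n_4$ and then $n_2+n_3$. The only cosmetic difference is that for the final substitution the paper plugs $n_3+3n_4$ back into the B\'ezout identity while you plug it into the Theorem~\ref{char} identity, which of course yields the same result.
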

\begin{proof}
In the case $(i)$ our na\"ive combinatorial count \eqref{naive1} has the form
$$2\ell^{2} + 3\ell = n_{2} + 3n_{3} + 6n_{4}.$$
From Theorem \ref{char} $(i)$, we get $n_{2}+2n_{3}+3n_{4} = \ell^{2}+3\ell-3$, and after combining it with the above combinatorial count we arrive at $n_{3} + 3n_{4} = \ell^{2}+3$. Using again the na\"ive combinatorial count, we obtain
$$2\ell^{2} + 3\ell = n_{2} + 3n_{3} + 6n_{4} = n_{2} + n_{3} + 2(n_{3}+3n_{4}) = n_{2}+n_{3} + 2\ell^{2}+6,$$
and hence $n_{2}+n_{3}=3\ell-6$, which gives us the desired formula. We proceed analogously for the case $(ii)$. 
\end{proof}
\begin{remark}
The above result indeed allows us to find all weak-combinatorial types of $\mathscr{M}$-arrangements with just one conic. For instance, if $d=6$, then the following system of equations must be solved:
$$\begin{cases}
n_{2}+n_{3}=3\\
n_{3}+3n_{4}=12
\end{cases}.$$
There are exactly two solutions in $\mathbb{Z}_{\geq 0}^3$, namely $(n_{2},n_{3},n_{4}) \in \{(3,0,4),(0,3,3)\}$.
\end{remark}
Next, we focus on possible obstructions to the number of singular points of $\mathcal{CL}$ situated on a smooth conic $C \in \mathcal{CL}$. 
\begin{proposition}
\label{mvp}

Let $\mathcal{CL} \subset \mathbb{P}^{2}_\mathbb{C}$ be an $\mathscr{M}$-arrangement of $d\geq 3$ lines and one conic $C$ having only ordinary singularities of multiplicity at most $4$. Denote by $\mathcal{CL}' = \mathcal{CL}\setminus \{C\}$ and let $r = |\mathcal{CL}' \cap C|$. 
\begin{enumerate}
\item[\textbf{a)}] Assume that $d = 2\ell \geq 4$. 
\begin{itemize}
    \item[$(\bullet)$] If $r=2m$, then the only possible values for $m$ are $m= \ell+2$ or $m \leq \ell-1$.
    \item[$(\bullet \bullet)$] If $r=2m+1$, then $m \leq \ell-2$.
\end{itemize}
\item[\textbf{b)}] Assume that $d = 2\ell + 1 \geq 3$. 
\begin{itemize}
    \item[$(\bullet)$] If $r=2m$, then the only possible values for $m$ are $m=\ell+2$ or $m \leq \ell$.
    \item[$(\bullet \bullet)$] If $r=2m+1$, then $m \leq \ell-1$.
\end{itemize}
\end{enumerate}
\end{proposition}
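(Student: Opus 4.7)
The plan is to parametrize the singular points of $\mathcal{CL}$ on the smooth conic $C$ and to combine the line-by-line B\'ezout identity with the combinatorial identities supplied by the preceding proposition, tightening the resulting system using the freeness of $\mathcal{CL}$.

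Let $c_k$ denote the number of $k$-fold singular points of $\mathcal{CL}$ lying on $C$ for $k \in \{2,3,4\}$. Since every singularity of $\mathcal{CL}$ is ordinary, each of the $d$ lines meets the smooth conic $C$ transversally in two distinct points, and B\'ezout applied line-by-line gives
\[
c_2 + 2 c_3 + 3 c_4 = 2d, \qquad c_2 + c_3 + c_4 = r.
\]
Eliminating, $c_3 = 2d - r - 2 c_4$ and $c_2 = 2r - 2d + c_4$, so non-negativity provides initial interval bounds for $c_4$. Adding $c_2 \leq n_2$ and $c_3 \leq n_3$ and using the preceding proposition ($n_2 + n_3 = 3\ell - 6$ in case (a), $3\ell - 2$ in case (b)) gives $c_2 + c_3 \leq n_2 + n_3$, which becomes a linear constraint on $c_4$ in terms of $m$ and $\ell$; the analogous upper bound $c_3 + 3 c_4 \leq n_3 + 3 n_4$ supplies a second constraint.

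The sharp obstruction comes from the freeness of $\mathcal{CL}$. By Theorem \ref{char} and the definition of the combinatorial Poincar\'e polynomial, $\mathfrak{P}(\mathcal{CL}; t) = (1 + (\ell - 1) t)(1 + (\ell + 2) t)$ in case (a) and $\mathfrak{P}(\mathcal{CL}; t) = (1 + \ell\, t)(1 + (\ell + 2) t)$ in case (b), so $\mathcal{CL}$ is free with exponents $(d_1, d_2) = (\ell - 1, \ell + 2)$ or $(\ell, \ell + 2)$ respectively. Analysing the restriction to the smooth rational curve $C$ of a minimal-degree generator of $AR(f)$ yields the dichotomy
\[
r \leq 2 d_1 \qquad \text{or} \qquad r = 2 d_2,
\]
because a generator of degree $d_1$ either restricts to a non-zero section of $\mathcal{O}_C(2 d_1)$ whose at most $2 d_1$ zeros contain all $r$ intersection points of $\mathcal{CL}'$ with $C$, or vanishes identically on $C$, in which case it is a multiple of the defining equation of $C$ and one must fall back to the generator of degree $d_2$.

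A parity split on $r$ finishes the argument. In case (a) with $r = 2m$, the dichotomy reads $m \leq \ell - 1$ or $m = \ell + 2$; with $r = 2m+1$, the even critical value $2(\ell + 2)$ is unavailable, and parity sharpens $r \leq 2\ell - 2$ to $r \leq 2\ell - 3$, i.e.\ $m \leq \ell - 2$. Case (b) is identical with $\ell - 1$ and $\ell - 2$ replaced by $\ell$ and $\ell - 1$ respectively. The main obstacle is the justification of the structural dichotomy $r \leq 2 d_1$ or $r = 2 d_2$, which rests on the behaviour of the syzygy module of $\mathcal{CL}$ under restriction to the conic; once that is in hand, the remainder of the proof is a short parity argument combined with the B\'ezout and previous-proposition inputs.
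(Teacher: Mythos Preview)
Your approach lands on the same structural fact the paper uses: once the exponents $(d_1,d_2)$ of the free arrangement $\mathcal{CL}$ are known, the number $r=|C\cap\mathcal{CL}'|$ is governed by a dichotomy of the type $r\le 2d_1$ or $r=2d_2$, after which the parity split you give is correct and immediate. The paper does not argue this dichotomy at all; it records the exponents $(\ell-1,\ell+2)$, resp.\ $(\ell,\ell+2)$, and then quotes \cite[Theorem~1.4(i)]{Macinic} (M\u acinic--Vall\`es, addition--deletion for conic-line arrangements) as a black box.

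Where your write-up has a genuine gap is exactly the step you flag. Your sketch of the dichotomy is imprecise in two places. First, a Jacobian syzygy is a triple $(a,b,c)$, so ``is a multiple of the defining equation of $C$'' is not well-posed as written; what is needed is the passage from $(a,b,c)\in{\rm AR}(f)$ to a logarithmic derivation tangent to every component, together with the observation that its restriction to $C\cong\mathbb{P}^1$ is a section of degree $2d_1$ vanishing at each point of $C\cap\mathcal{CL}'$ because the derivation is tangent to the remaining components. Second, and more seriously, even once that restriction is set up, the fallback branch (``use the generator of degree $d_2$'') only yields $r\le 2d_2$; the \emph{equality} $r=2d_2$ in that branch requires the full freeness of $\mathcal{CL}$ (both generators together determine the splitting type of the restricted rank-two bundle on $C$), and your sketch does not supply this. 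That equality is precisely the non-trivial content of the cited M\u acinic--Vall\`es theorem.

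Finally, the opening two paragraphs---the variables $c_k$, the line-by-line B\'ezout identities, the inequalities $c_i\le n_i$, and the appeal to the previous proposition---are never used in your actual argument: once the dichotomy is available, the proposition follows from it and parity alone. You should drop that material.
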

\begin{proof}
 We are going to use \cite[Theorem 1.4 (i)]{Macinic}. Recall that if $\mathcal{CL}$ is an $\mathscr{M}$-arrangement with $d\geq 3$ lines and one conic $C$, then either
 \begin{enumerate}
\item $d = 2\ell$ and $\mathcal{CL}$ is free with the exponents $(\ell-1, \ell+2)$, or
\item $d = 2\ell +1$ and $\mathcal{CL}$ is free with the exponents $(\ell, \ell+2)$.
 \end{enumerate}
 With the above data in hand, we can apply the aforementioned result directly.
\end{proof}
\begin{remark}
It is worth emphasizing directly that the number of intersection points $r = |\mathcal{CL}' \cap C|$ plays a crucial role and we are going to explain it now using the so-called addition technique. Assume that $\mathcal{L}$ is an $\mathscr{M}$-arrangement of $d\geq 3$ lines that is not a pencil and let $C$ be a conic such that $2d = |\mathcal{L}\cap C|$, the maximal possible number according to B\'ezout's Theorem. Let $\mathcal{CL} := \mathcal{L}\cup C = \{f=0\}$ and denote by $d_1\leq  d_{2}$ the first two smallest degrees of a minimal system of generators for the module of Jacobian syzygies ${\rm AR}(f)$. Now, according to \cite[Theorem 1.6]{ADP}, we have  $d_{1}+d_{2} = {\rm deg}(\mathcal{CL})+1$ and hence $\mathcal{CL}$ is \textbf{never} free.
\end{remark}
\section[Combinatorial Poincar\'e polynomials for M-arrangements of conics and lines]{Combinatorial Poincar\'e polynomials for $\mathscr{M}$-arrangements of conics and lines}
In this section we elaborate on combinatorial Poincar\'e polynomials for $\mathscr{M}$-arrangements of conics and lines with ordinary singularities. Let us recall the following definition.
\begin{definition}[{\cite[Definition 1.4]{Pok}}]
Let $\mathcal{CL} \subset \mathbb{P}^{2}_{\mathbb{C}}$ be an arrangement of $k$ conics and $d$ lines such that it admits only ordinary intersection points. Then the combinatorial Poincar\'e polynomial of $\mathcal{CL}$ is defined as
$$\mathfrak{P}(\mathcal{CL};t) = 1+(2k+d-1)t + \bigg(\sum_{r\geq 2}(r-1)n_{r}-d+1 \bigg)t^{2}.$$
\end{definition}
It was proved by the second author that if $\mathcal{CL}$ is a free arrangement of $d$ lines and $k$ conics with only \textit{ordinary quasi-homogeneous singularities} with the exponents $(d_{1},d_{2})$, then $\mathfrak{P}(\mathcal{CL};t)$ splits over the rationals, and we have 
\begin{equation}
    \mathfrak{P}(\mathcal{CL};t) = (1+d_{1}t)(1+d_{2}t),
\end{equation}
see \cite[Theorem 1.5]{Pok}.
\begin{remark}
Observe that if $k=0$, i.e., our arrangement consists of nothing but lines, then the combinatorial Poincar\'e polynomial coincides with the well-known Poincar\'e polynomial for line arrangements.
\end{remark}
As at the end of the previous section, we concentrate on the case of $\mathscr{M}$-arrangements $\mathcal{CL}\subset \mathbb{P}^{2}_{\mathbb{C}}$ of $d$ lines and one conic $C$ with ordinary quasi-homogeneous singularities. The result below presents an explicit formula for the Poincar\'e polynomial of the line arrangement $\mathcal{L} = \mathcal{CL} \setminus \{C\}$.
\begin{theorem}
\label{poil}
Let $\mathcal{CL}\subset \mathbb{P}^{2}_{\mathbb{C}}$ be an $\mathscr{M}$-arrangement of $d\geq 3$ lines and one conic with only ordinary singularities of multiplicity at most $4$. Consider the deletion arrangement $\mathcal{L} = \mathcal{CL}\setminus \{C\}$ and let $r = |C \cap \mathcal{L}|$ --- we assume that $r$ is finite.
\begin{enumerate}
    \item If $d=2\ell\geq 4$, then $\mathfrak{P}(\mathcal{L};t) = (\ell^{2}+\ell-2-r)t^{2}+(2\ell-1)t+1.$
    \item If $d=2\ell+1\geq 3$, then $\mathfrak{P}(\mathcal{L};t) = (\ell^{2}+2\ell-r)t^{2}+2\ell t+1.$
\end{enumerate}
\end{theorem}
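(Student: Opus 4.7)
The plan is to use that $\mathcal{L}$ is a pure line arrangement, so its combinatorial Poincar\'e polynomial reduces to the classical one:
$$\mathfrak{P}(\mathcal{L};t) = 1 + (d-1)t + \Big(\sum_{k\geq 2}(k-1)t_k - d + 1\Big) t^{2},$$
where $t_k$ denotes the number of $k$-fold intersection points of $\mathcal{L}$. The coefficient of $t$ is then $d-1$, which is exactly $2\ell-1$ in case (1) and $2\ell$ in case (2), so the real work consists in computing the coefficient of $t^{2}$, i.e., expressing $T_{\mathcal{L}}:=\sum_{k\geq 2}(k-1)t_k$ in terms of $\ell$ and $r$ alone.

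To do this I would split the singular set of $\mathcal{CL}$ according to whether a point lies on $C$ or not: set $n_k = m_k^{+}+m_k^{-}$, where $m_k^{+}$ (resp.\ $m_k^{-}$) is the number of $k$-fold points of $\mathcal{CL}$ lying on $C$ (resp.\ off $C$). Since $r$ is finite and every intersection point $p\in C\cap\mathcal{L}$ is automatically an ordinary singularity of $\mathcal{CL}$ of multiplicity between $2$ and $4$, we have $r = m_2^{+}+m_3^{+}+m_4^{+}$. Removing $C$ lowers the multiplicity of each on-conic $k$-fold point by exactly one (so $2$-fold on-conic points become smooth), while off-conic points are unaffected. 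Consequently
$$t_2 = m_2^{-}+m_3^{+},\qquad t_3 = m_3^{-}+m_4^{+},\qquad t_4 = m_4^{-},$$
and a direct expansion yields the clean identity
$$T_{\mathcal{L}} = (n_2 + 2n_3 + 3n_4) - r.$$
The conceptual meaning is that deleting $C$ shaves off exactly one unit from the contribution of each singular point of $\mathcal{CL}$ that lies on $C$, regardless of whether that point becomes smooth or merely drops in multiplicity.

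The final step is to feed in Theorem \ref{char} with $k=1$: one gets $n_2+2n_3+3n_4 = \ell^2+3\ell-3$ in case (1) and $n_2+2n_3+3n_4 = \ell^2+4\ell$ in case (2). Plugging these into $T_{\mathcal{L}} - d + 1 = (n_2+2n_3+3n_4) - r - d + 1$ with $d=2\ell$ or $d=2\ell+1$ produces the coefficients $\ell^2+\ell-2-r$ and $\ell^2+2\ell-r$ respectively, which is precisely the claim. The only delicate point is the bookkeeping of how multiplicities degenerate under the deletion of $C$; the hypothesis that all singularities of $\mathcal{CL}$ are ordinary of multiplicity at most $4$ ensures that no exotic configurations (such as tangencies of $C$ to a line) can contribute, so the case analysis underlying the formulas for $t_2, t_3, t_4$ is exhaustive.
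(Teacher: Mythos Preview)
Your argument is correct but follows a genuinely different route from the paper. The paper invokes an external addition--deletion identity for Poincar\'e polynomials, namely
\[
\mathfrak{P}(C_{1}\cup C_{2};t)=\mathfrak{P}(C_{1};t)+\mathfrak{P}(C_{2};t)+t-1+(r-1)t^{2},
\]
applies it with $C_{2}=C$ (so $\mathfrak{P}(C;t)=1+t+t^{2}$), and then uses that an $\mathscr{M}$-arrangement is free with explicitly known exponents to write $\mathfrak{P}(\mathcal{CL};t)$ in factored form and solve for $\mathfrak{P}(\mathcal{L};t)$. You instead work purely combinatorially: you track how the multiplicity of each singular point changes under the deletion of $C$, derive the identity $\sum_{k\geq 2}(k-1)t_{k}=(n_{2}+2n_{3}+3n_{4})-r$, and then plug in Theorem~\ref{char} with $k=1$. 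Your route is more self-contained (it stays entirely inside the paper and avoids the cited addition formula), at the cost of the on-conic/off-conic case analysis; the paper's route is slicker but imports more machinery. Note also that your derivation of $T_{\mathcal{L}}=(n_{2}+2n_{3}+3n_{4})-r$ is essentially a bare-hands proof of the $t^{2}$-part of the addition formula in this special setting, so the two arguments are more closely related than they first appear.
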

\begin{proof}
We will use a general result devoted to Poincar\'e polynomials \cite[Theorem 1.2]{Pok1}, which can be formulated as follows. If $C_{1}\cup C_{2}$ is a reduced plane curve admitting only quasi-homogeneous singularities and such that $|C_{1} \cap C_{2}| = r$ is finite, then
$$\mathfrak{P}(C_{1} \cup C_{2};t) = \mathfrak{P}(C_{1};t) + \mathfrak{P}(C_{2};t) + t-1 + (r-1)t^{2},$$
whereby, for a reduced plane curve $C$ of degree $e$, its Poincar\'e polynomial is defined as
$$\mathfrak{P}(C;t) := 1+(e-1)t + ((e-1)^{2} - \tau(C))t^{2}.$$
Recall that if $C$ is a smooth conic then $\mathfrak{P}(C;t) = 1+t+t^{2}$, and hence we get
\begin{equation}
\label{pp}
\mathfrak{P}(\mathcal{CL};t) = \mathfrak{P}(\mathcal{L};t) + 2t + rt^{2}.
\end{equation}
\begin{enumerate}
\item Assume that $d = 2\ell \geq 4$. Since $\mathcal{CL}$ is an $\mathscr{M}$-arrangement we have the following identity:
$$\mathfrak{P}(\mathcal{CL};t) = (1+d_{1}t)(1+d_{2}t) = (1+(\ell-1)\cdot t)(1+(\ell+2)\cdot t) = (\ell^{2}+\ell-2)t^{2}+(2\ell+1)t+1.$$
Plugging this into \eqref{pp}, we obtain
$$\mathfrak{P}(\mathcal{L};t) = (\ell^{2}+\ell-2-r)t^{2}+(2\ell-1)t+1.$$
\item Assume that $d = 2\ell +1 \geq 3$. We use again the fact that $\mathcal{CL}$ is an $\mathscr{M}$-arrangement obtaining
$$\mathfrak{P}(\mathcal{CL};t) = (1+d_{1}t)(1+d_{2}t) = (1+\ell\cdot t)(1+(\ell+2)\cdot t) = (\ell^2 + 2\ell)t^{2} + (2\ell+2)t+1,$$
hence
$$\mathfrak{P}(\mathcal{L};t) = (\ell^{2}+2\ell-r)t^{2}+2\ell t+1.$$
\end{enumerate}
This completes the proof.
\end{proof}
\begin{remark}
By Proposition \ref{mvp}, we know that if $\mathcal{CL}\subset \mathbb{P}^{2}_{\mathbb{C}}$ is an arrangement of $d=2\ell\geq 4$ lines and one conic $C$ such that $r = |C \cap \mathcal{L}| = 2m$, where $\mathcal{L} = \mathcal{CL}\setminus \{C\}$, then either $m=\ell+2$ or $m\leq \ell-1$. Assume that $r=2\ell+4$, then by Theorem \ref{poil} we have
$$\mathfrak{P}(\mathcal{L};t) = (\ell^{2}-\ell-6)t^{2}+(2\ell-1)t+1 = (1+(\ell-3)t)(1+(\ell+2)t),$$
which suggests that $\mathcal{L}$ might be a free arrangement with exponents $(\ell-3, \ell+2)$. On the other hand, if $\mathcal{L}$ is an arrangement of $d=2\ell$ lines with ordinary points of multiplicity at most $4$, then by \cite[Theorem 2.1]{Dimca2} we have
$${\rm mdr}(\mathcal{L}) \geq \frac{1}{2} \cdot {\rm deg}(\mathcal{L}) - 2 \geq \ell - 2,$$
a contradiction.
\end{remark}
The example below demonstrates that the bound $m\leq \ell-1$ in Proposition \ref{mvp} $\textbf{a)}(\bullet)$ is sharp, and the resulting arrangement $\mathcal{L}$ in this case might be free.
\begin{example}[{\cite[p. 8]{PP2024}}]
Let us consider the arrangement $\mathcal{CL}_1 \subset \mathbb{P}^{2}_{\mathbb{C}}$ given by the following defining polynomial
$$f(x,y,z) = xy(y+x-z)(y-x-z)(y+x+z)(y-x+z)(x^2 + y^2 - z^2).$$ 
We see that this arrangement has weak combinatorics $C(6,1;3,0,4)$.
Figure \ref{f1} presents a geometric realization of the mentioned arrangement.
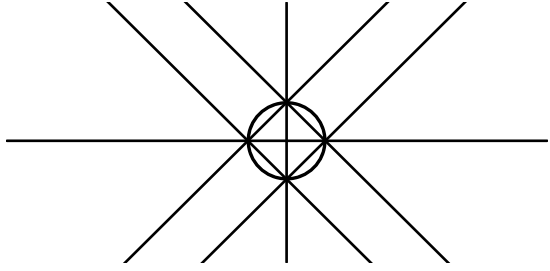
\begin{figure}[h]
\definecolor{yqyqyq}{rgb}{0.5019607843137255,0.5019607843137255,0.5019607843137255}
\definecolor{wqwqwq}{rgb}{0.3764705882352941,0.3764705882352941,0.3764705882352941}
\centering
\begin{tikzpicture}[line cap=round,line join=round,>=triangle 45,x=1.0cm,y=1.0cm,scale=0.5]
\clip(-7.361199998992579,-3.2212539171466634) rectangle (6.856141156665902,3.6615232449100428);
\draw [line width=1.pt] (0.,0.) circle (1.cm);
\draw [line width=1.pt] (0.,0.) circle (1.0177943973136974cm);
\draw [line width=1.pt,domain=-7.361199998992579:6.856141156665902] plot(\x,{(--1.0359054352031525--1.0177943973136971*\x)/1.0177943973136974});
\draw [line width=1.pt,domain=-7.361199998992579:6.856141156665902] plot(\x,{(--1.0359054352031525-1.0177943973136974*\x)/1.0177943973136974});
\draw [line width=1.pt,domain=-7.361199998992579:6.856141156665902] plot(\x,{(--1.0359054352031525-1.0177943973136974*\x)/-1.0177943973136974});
\draw [line width=1.pt,domain=-7.361199998992579:6.856141156665902] plot(\x,{(--1.0359054352031525--1.0177943973136976*\x)/-1.0177943973136974});
\draw [line width=1.pt,domain=-7.361199998992579:6.856141156665902] plot(\x,{(-0.-0.*\x)/2.0355887946273947});
\draw [line width=1.pt] (0.,-3.2212539171466634) -- (0.,3.6615232449100428);
\end{tikzpicture}
\caption{Arrangement $\mathcal{CL}_1$.}
\label{f1}
\end{figure}

\noindent
We see that $r = |C \cap \mathcal{L}|=4$. Using Theorem \ref{poil}, we get
$$\mathfrak{P}(\mathcal{L};t) = (\ell^{2}+\ell-2-r)t^{2}+(2\ell-1)t+1 = 6t^2+5t+1 = (1+2t)(1+3t),$$
which suggests that $\mathcal{L}$ might be free.  This is indeed the case and can be verified through a simple inspection.
\end{example}

\begin{example}[{\cite[p. 9]{PP2024}}]
Now we look at the odd-degree case where $d=2\ell+1$ for some~$\ell$. Consider the arrangement $\mathcal{CL}_{2} \subset \mathbb{P}^{2}_{\mathbb{C}}$ given by
    $$f(x,y,z) = x(x-z)(x+z)(y-z)(y+z)(y-x)(y+x)(x^2 + y^2 -2z^2 ).$$
We see that this arrangement has weak combinatorics $C(7,1;5,2,4)$. Figure \ref{f2} presents a geometric realization of our arrangement.

\begin{figure}[h]
\definecolor{wqwqwq}{rgb}{0.3764705882352941,0.3764705882352941,0.3764705882352941}
\centering
\begin{tikzpicture}[line cap=round,line join=round,>=triangle 45,x=1.0cm,y=1.0cm,scale=0.5]
\clip(-8.626789898750246,-1.5957639615131654) rectangle (8.312131702183478,7.148303929753834);
\draw [line width=1.pt] (0.,-1.5957639615131654) -- (0.,7.148303929753834);
\draw [rotate around={0.:(0.,0.6)},line width=1.pt] (0.,0.6) ellipse (2.065591117977289cm and 1.6cm);
\draw [line width=1.pt,domain=-8.626789898750246:8.312131702183478] plot(\x,{(--2.2--0.6*\x)/1.});
\draw [line width=1.pt,domain=-8.626789898750246:8.312131702183478] plot(\x,{(--2.2-0.6*\x)/1.});
\draw [line width=1.pt,domain=-8.626789898750246:8.312131702183478] plot(\x,{(-1.-1.*\x)/1.});
\draw [line width=1.pt,domain=-8.626789898750246:8.312131702183478] plot(\x,{(-1.--1.*\x)/1.});
\draw [line width=1.pt,domain=-8.626789898750246:8.312131702183478] plot(\x,{(-0.2--0.6*\x)/1.});
\draw [line width=1.pt,domain=-8.626789898750246:8.312131702183478] plot(\x,{(-0.2-0.6*\x)/1.});
\end{tikzpicture}
\caption{Arrangement $\mathcal{CL}_{2}$.}
\label{f2}
\end{figure}
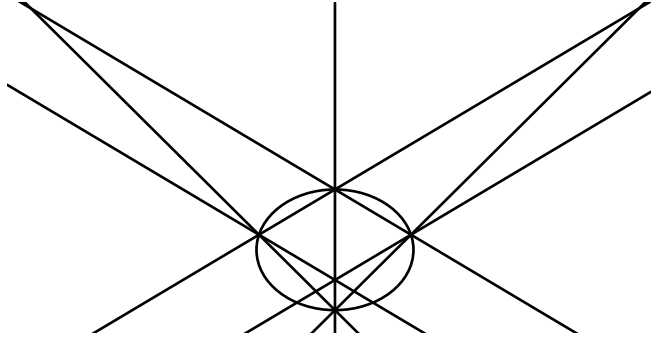

\noindent
Observe that $r = |C \cap \mathcal{L}|=6$, which shows directly that Proposition \ref{mvp} \textbf{b}$(\bullet)$, which states that $m\leq \ell$ is sharp. Using Theorem \ref{poil}, we get
$$\mathfrak{P}(\mathcal{L};t) = (\ell^{2}+2\ell-r)t^{2}+2\ell t+1 = 9t^{2}+6t+1 = (1+3t)(1+3t),$$
which suggests that $\mathcal{L}$ might be free, which is indeed the case.
\end{example}
The next example shows, as one might expect, that the deletion of a smooth conic from an $\mathscr{M}$-arrangement can lead to a non-free line arrangement $\mathcal{L}$.
\begin{example}
Consider the following conic-line arrangement $\mathcal{CL}_{3} \subset \mathbb{P}^{2}_{\mathbb{C}}$ given by
\begin{multline*}
f(x,y,z) = xy(x-z)(x+z)(y+z)(y-z)(y-x-z)(y-x+z)(y-x) \\ (-x^{2}+xy-y^{2}+z^{2}).
\end{multline*}
The arrangement has weak combinatorics $C(9,1;6,4,6)$. Using this information we can compute that $\tau(\mathcal{CL}_{3})=76$ and it gives us that $\mathcal{CL}_{3}$ is an $\mathscr{M}$-arrangement with exponents $(4,6)$.
Figure \ref{f3} presents a geometric realization of the arrangement.

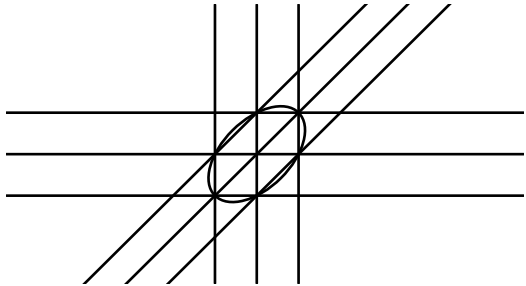
\begin{figure}[h]
\definecolor{uuuuuu}{rgb}{0.26666666666666666,0.26666666666666666,0.26666666666666666}
\centering
\begin{tikzpicture}[line cap=round,line join=round,>=triangle 45,x=1.0cm,y=1.0cm,scale=0.55]
\clip(-5.993665295370997,-3.112645222782914) rectangle (6.387229328409073,3.602513463852773);
\draw [line width=1.pt] (0.,-3.112645222782914) -- (0.,3.602513463852773);
\draw [line width=1.pt] (1.,-3.112645222782914) -- (1.,3.602513463852773);
\draw [line width=1.pt] (-1.,-3.112645222782914) -- (-1.,3.602513463852773);
\draw [line width=1.pt,domain=-5.993665295370997:6.387229328409073] plot(\x,{(-0.-0.*\x)/1.});
\draw [line width=1.pt,domain=-5.993665295370997:6.387229328409073] plot(\x,{(--1.-0.*\x)/1.});
\draw [line width=1.pt,domain=-5.993665295370997:6.387229328409073] plot(\x,{(-1.-0.*\x)/1.});
\draw [line width=1.pt,domain=-5.993665295370997:6.387229328409073] plot(\x,{(--1.--1.*\x)/1.});
\draw [line width=1.pt,domain=-5.993665295370997:6.387229328409073] plot(\x,{(-0.--1.*\x)/1.});
\draw [line width=1.pt,domain=-5.993665295370997:6.387229328409073] plot(\x,{(-1.--1.*\x)/1.});
\draw [rotate around={-135.:(0.,0.)},line width=1.pt] (0.,0.) ellipse (1.4142135623730951cm and 0.816496580927726cm);

\end{tikzpicture}
\caption{Arrangement $\mathcal{CL}_{3}$.}
\label{f3}
\end{figure}
Let $\mathcal{L}=\mathcal{CL}_{3} \setminus \{C\}$, so we end up with an arrangement of $9$ lines such that $n_{2}^{\mathcal{L}} = 6$ and $n_{3}^{\mathcal{L}} = 10$. Observe that this line arrangement cannot be free since its Poincar\'e polynomial has the form $\mathfrak{P}(\mathcal{L};t)=18t^2+8t+1$ and hence it does not split over the rationals. However, $\mathcal{L}$ is \textbf{plus-one generated} with exponents $(4,5,6)$, see \cite[Theorem 3.2 (1)(b)(i)]{Mac}.
\end{example}
The following example gives a new construction of an \(\mathscr{M}\)-arrangement consisting of one conic and \(11\) lines which, to the best of our knowledge, appears here for the first time.
\begin{example}
\label{new}
Consider the following conic-line arrangement $\mathcal{CL}_4 \subset \mathbb P^2_{\mathbb C}$
given by

\begin{multline*}
f(x,y,z)={}
xyz(x-z)(x+z)(y+z)(y-z)(y-x-z)(y-x+z)(y-x)(x+y)\\
(-x^2+xy-y^2+z^2).
\end{multline*}
The arrangement has weak combinatorics $C(11,1;11,2,10)$. Using this information we can compute
\[
\tau(\mathcal{CL}_4)=11+4\cdot 2+9\cdot 10=109.
\]
Moreover, a direct \texttt{SINGULAR} computation gives $\operatorname{mdr}(f)=5$, and hence $\mathcal{CL}_4$
is an $\mathscr{M}$-arrangement with exponents $(5,7)$.

Let $\mathcal{L}=\mathcal{CL}_4\setminus\{C\}$, where
\[
C:\ -x^2+xy-y^2+z^2=0.
\]
Then we end up with an arrangement of $11$ lines such that
\[
n_2^{\mathcal{L}}=7,\qquad n_3^{\mathcal{L}}=8,\qquad n_4^{\mathcal{L}}=4.
\]
The Poincar\'e polynomial of $\mathcal{L}$ has the form
\[
\mathfrak{P}(\mathcal{L};t)
=
1+10t+\bigl(7+2\cdot 8+3\cdot 4-11+1\bigr)t^2
=
25t^2+10t+1
=
(1+5t)^2.
\]
This suggests that the deletion arrangement $\mathcal{L}$ might be free. This is indeed the case: direct computations give $\tau(\mathcal{L})=75$ and $\operatorname{mdr}(L)=5$, therefore $\mathcal{L}$ is free with
exponents $(5,5)$.
\end{example}
\section[Boundedness of real M-arrangements of one conic and lines]{Boundedness of real $\mathscr{M}$-arrangements of one conic and lines}
We now prove a boundedness result for arrangements contained in the real projective plane.
\begin{theorem}
Let $\mathcal{CL}= \mathcal{L}\cup C\subset \mathbb P^2_{\mathbb R}$ be a real $\mathscr{M}$-arrangement
consisting of $d\geq 3$ lines and one smooth conic $C$. Assume that $\mathcal{CL}$ has only
ordinary singularities of multiplicity at most $4$. 
\begin{itemize}
    \item If $d=2\ell$, then $\ell\leq 9$. In particular, there are no such arrangements
    for even $d\geq 20$.
    \item If $d=2\ell+1$, then $\ell\leq 10$. In particular, there are no such arrangements
    for odd $d\geq 23$.
\end{itemize}
\end{theorem}

\begin{proof}
Let $n_i$ denote the number of $i$-fold points of $\mathcal{CL}$, and let
$n_i^{\mathcal{L}}$ denote the number of $i$-fold points of the line arrangement
$\mathcal{L}=\mathcal{CL}\setminus\{C\}$. Put
\[
b_i=\#\{p\in C\cap \mathcal{L}:\ p \text{ is an } i\text{-fold point of } \mathcal{CL}\}.
\]
Then
\begin{equation}
\label{c1}
b_2+2b_3+3b_4=2d. 
\end{equation}
After deleting the conic, points on $C$ drop their multiplicity by one, whereas
points outside $C$ keep their multiplicity. Hence
\begin{equation}
\label{c2}
n_2^{\mathcal{L}}=n_2-b_2+b_3,\qquad
n_3^{\mathcal{L}}=n_3-b_3+b_4,\qquad
n_4^{\mathcal{L}}=n_4-b_4. 
\end{equation}
We can assume that $d\geq 5$ to avoid triviality. Since $\mathcal{L}$ is a real line arrangement with points of multiplicity at most $4$,
Melchior's inequality \cite{Melchior} gives
\[
n_2^{\mathcal{L}}\geq 3+n_4^{\mathcal{L}}.
\]
Using \eqref{c2}, we obtain
\[
n_2-b_2+b_3\geq 3+n_4-b_4.
\]
Together with \eqref{c1}, this gives
\begin{equation}
\label{c3}
3b_3+4b_4\geq 2d+3+n_4-n_2.
\end{equation}

Assume first that $d=2\ell$. By Proposition \ref{kon} (i), we have
\[
n_2+n_3=3\ell-6,\qquad n_3+3n_4=\ell^2+3.
\]
Thus \eqref{c3} gives
\begin{equation}
\label{c4}
3b_3+4b_4
\geq
\frac{\ell^2+3\ell+30+2n_3}{3}.
\end{equation}
On the other hand, from \eqref{c1} and $b_3\leq n_3$ we have
\begin{equation}
\label{c5}
3b_3+4b_4
=
\frac43(2b_3+3b_4)+\frac13b_3
\leq
\frac{16\ell+n_3}{3}. 
\end{equation}
Combining \eqref{c4} and \eqref{c5}, we obtain
\[
\ell^2-13\ell+30+n_3\leq 0.
\]
Hence $\ell\leq 10$. If $\ell=10$, then $n_3=0$, but
\[
n_3+3n_4=\ell^2+3=103
\]
is impossible mod $3$. Therefore $\ell\leq 9$.

Now assume that $d=2\ell+1$. Then by Proposition \ref{kon} (ii) we have
\[
n_2+n_3=3\ell-2,\qquad n_3+3n_4=\ell^2+\ell+2.
\]
The same argument gives
\[
3b_3+4b_4
\geq
\frac{\ell^2+4\ell+23+2n_3}{3}
\]
and
\[
3b_3+4b_4
\leq
\frac{16\ell+8+n_3}{3}.
\]
Therefore
\[
\ell^2-12\ell+15+n_3\leq 0.
\]
Since $n_3\geq 0$, we obtain $\ell\leq 10$.
\end{proof}

\section[A remark on Castelnuovo-Mumford regularity of M-curves]{A remark on the Castelnuovo-Mumford regularity of $\mathscr{M}$-curves}
We conclude the paper by recording precise values of the Castelnuovo-Mumford regularity of the $S$-modules ${\rm AR}(f)$ and $M(f)$ that are associated with $\mathscr{M}$-curves, and this observation should be viewed as a small contribution to the broader picture presented in \cite{CMD}. Recall that to any graded $S$-module $M$ we can associate a coherent sheaf $\widetilde{M}$ on $\mathbb{P}^{2}_{\mathbb{C}}$, the sheafification of $M$.
\begin{definition}
We say that a coherent sheaf $\widetilde{M}$ is $m$-regular if
$$H^{1}(\mathbb{P}^{2}_{\mathbb{C}}, \widetilde{M}(m-1)) = H^{2}(\mathbb{P}^{2}_{\mathbb{C}}, \widetilde{M}(m-2))=0.$$
The minimal $m$ such that $\widetilde{M}$ is $m$-regular is called the Castelnuovo-Mumford regularity of $\widetilde{M}$, and it is denoted by ${\rm reg}(\widetilde{M})$. We also set ${\rm reg}(M) = {\rm reg}(\widetilde{M})$.
\end{definition}

In order to present the main observation of this section, we recall some necessary notions, following the lines of \cite{CMD}. Let $C = \{f=0\} \subset \mathbb{P}^{2}_{\mathbb{C}}$ be a reduced plane curve. Consider the following submodule of Koszul-type relations ${\rm KR}(f)$, i.e., this is the submodule of ${\rm AR}(f)$ generated by the following obvious relations of degree $d-1$, namely
$$(\partial_{y}f, -\partial_{x}f,0), \quad (\partial_{z}f, 0, -\partial_{x}f), \quad (0, \partial_{z}f, -\partial_{y}f).$$
Define $$ER(f) = {\rm AR}(f) / {\rm KR}(f)$$
and 
$${\rm mdr}_{e}(f) = \min \{r \in \mathbb{Z} : {\rm ER}(f)_{r} \neq 0\}.$$
If ${\rm mdr}(f) < d-1$ then ${\rm mdr}_{e}(f) = {\rm mdr}(f)$, and this situation occurs, for instance, if our curves are free.
\begin{definition}
For a reduced polynomial $f \in S_{d}$ we define
\begin{enumerate}
    \item the coincidence threshold as
    $${\rm ct}(f) = \max \{q : {\rm dim} \, M(f)_{k} = {\rm dim} \, M(g)_{k} \text{ for all } k\leq q\},$$
    where $g$ is a homogeneous polynomial of $S$ of the same degree $d$ as $f$ having the property that $C' = \{g=0\}\subset \mathbb{P}^{2}_{\mathbb{C}}$ is a smooth curve.
    \item the stability threshold
    $${\rm st}(f) = \min \{q : {\rm dim} \, M(f)_{k} = \tau(C) \text{ for all } k \geq q\}.$$
\end{enumerate}
\end{definition}
\noindent
It turns out that if $C = \{f=0\}\subset \mathbb{P}^{2}_{\mathbb{C}}$ is a reduced singular curve, then
$${\rm ct}(f) = {\rm mdr_{e}}(f) + d-2.$$
Moreover, one has the following relation involving ${\rm ct}(f)$ and ${\rm st}(f)$ in the case where $C = \{f=0\} \subset \mathbb{P}^{2}_{\mathbb{C}}$ is free, namely one has
\begin{equation}
{\rm ct}(f) + {\rm st}(f) = 3(d-2).
\end{equation}
Moreover, we have 
$${\rm st}(f) = 3(d-2) - {\rm ct}(f) = 2d-4-{\rm mdr}(f)=d-3+d_{2},$$
where $d_{2}$ is the second exponent of the free curve $C$.
We will need the following result \cite[Theorem 2.6]{CMD}.
\begin{theorem}
Let $C = \{f=0\}\subset \mathbb{P}^{2}_{\mathbb{C}}$ be a reduced singular curve of degree $d$. Then the equality ${\rm reg}(M(f)) = {\rm st}(f)$ holds if and only if $C$ is free.
\end{theorem}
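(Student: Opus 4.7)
The plan is to translate both sides of the equality into local cohomology and to exploit the fact that the sheafification $\widetilde{M(f)}$ is supported on the $0$-dimensional singular scheme of $C$. As a consequence $H^i(\mathbb{P}^{2}_{\mathbb{C}},\widetilde{M(f)}(n))=0$ for every $i\geq 1$ and every $n\in\mathbb{Z}$, which forces $H^j_{\mathfrak{m}}(M(f))=0$ for $j\geq 2$. Only the $\mathfrak{m}$-torsion $N(f):=H^0_{\mathfrak{m}}(M(f))=\widetilde{J_f}/J_f$ and the deficiency module $H^1_{\mathfrak{m}}(M(f))$ can contribute, and the standard local cohomology characterization of Castelnuovo--Mumford regularity specializes to ${\rm reg}(M(f))=\max\{a_0,\,a_1+1\}$, where $a_i:=\max\{n\colon H^i_{\mathfrak{m}}(M(f))_n\neq 0\}$ (with the convention $a_i=-\infty$ if the module vanishes). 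In parallel, the four-term exact sequence $0\to N(f)\to M(f)\to M(f)^{\rm sat}\to H^1_{\mathfrak{m}}(M(f))\to 0$, combined with $\dim M(f)^{\rm sat}_n=\tau(C)$ for every $n$ (because $\widetilde{M(f)}$ is a skyscraper sheaf of length $\tau(C)$), yields the key identity $\dim M(f)_n-\tau(C)=\dim N(f)_n-\dim H^1_{\mathfrak{m}}(M(f))_n$. Hence ${\rm st}(f)$ is the smallest integer $q$ such that $\dim N(f)_n=\dim H^1_{\mathfrak{m}}(M(f))_n$ for all $n\geq q$.

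The direction \emph{$C$ free $\Rightarrow$ equality} is then immediate: freeness is equivalent to $J_f$ being saturated, hence to $N(f)=0$. The identity above collapses to $\dim M(f)_n-\tau(C)=-\dim H^1_{\mathfrak{m}}(M(f))_n$, so ${\rm st}(f)=a_1+1={\rm reg}(M(f))$. A cross-check via the minimal free resolution $0\to S(-d-d_1+1)\oplus S(-d-d_2+1)\to S(-d+1)^{3}\to S\to M(f)\to 0$ reads off ${\rm reg}(M(f))=d-3+d_2$, matching the formula ${\rm st}(f)=d-3+d_2$ recalled just above the theorem.

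For the converse I would argue by contraposition, aiming to show $a_0>a_1$ whenever $N(f)\neq 0$. In that regime ${\rm reg}(M(f))=a_0$, while at $n=a_0$ the key identity gives $\dim N(f)_{a_0}-\dim H^1_{\mathfrak{m}}(M(f))_{a_0}=\dim N(f)_{a_0}>0$, forcing ${\rm st}(f)\geq a_0+1>{\rm reg}(M(f))$ and hence inequality. The mechanism producing $a_0>a_1$ is a Hartshorne-type duality between $N(f)$ and $H^1_{\mathfrak{m}}(M(f))$ (in the spirit of Dimca--Sticlaru for reduced plane curves): up to a degree shift depending on $d$ and ${\rm mdr}(f)$, the two modules are dual to one another, and a careful bookkeeping of the resulting degree ranges forces the top of $N(f)$ to lie strictly above the top of $H^1_{\mathfrak{m}}(M(f))$ once $N(f)\neq 0$. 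The main obstacle is precisely locating and applying this duality so as to rule out the borderline scenario $a_0\leq a_1$, in which the identity $\dim N(f)_n=\dim H^1_{\mathfrak{m}}(M(f))_n$ could still hold in every sufficiently high degree despite $N(f)$ being nonzero; once the duality is pinned down, the strict inequality ${\rm reg}(M(f))\neq{\rm st}(f)$ follows from the short degree chase above.
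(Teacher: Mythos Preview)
The paper does not actually prove this theorem: it is quoted as \cite[Theorem~2.6]{CMD} and used as a black box, so there is no in-house argument against which to compare your proposal.

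On the substance of your sketch, the implication ``$C$ free $\Rightarrow$ ${\rm reg}(M(f))={\rm st}(f)$'' is fine once one works with the \emph{module} regularity ${\rm reg}(M(f))=\max\{a_0,\,a_1+1\}$. (A side remark: the paper's Definition~5.1 is literally the sheaf regularity of $\widetilde{M(f)}$, and since $\widetilde{M(f)}$ is a skyscraper sheaf with $H^{i}(\mathbb{P}^{2}_{\mathbb{C}},\widetilde{M(f)}(n))=0$ for all $i\geq 1$ and all $n$, that quantity is not well defined; the invariant intended, and the one used in \cite{CMD}, is the local-cohomology regularity you adopt.)

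The converse, however, is not established. Everything rests on the claimed inequality $a_0>a_1$ whenever $N(f)\neq 0$, and for this you only invoke an unspecified ``Hartshorne-type duality between $N(f)$ and $H^1_{\mathfrak m}(M(f))$''. The duality that is actually available for reduced plane curves is a \emph{self}-duality of $N(f)$ about the degree $3(d-2)$; it does not by itself compare the top degrees of $N(f)$ and of $H^1_{\mathfrak m}(M(f))$. In the regime $a_0\le a_1$, which you have not excluded, your degree chase collapses: one gets ${\rm reg}(M(f))=a_1+1$ together with ${\rm st}(f)\le a_1+1$, and nothing you wrote rules out $\dim N(f)_n=\dim H^1_{\mathfrak m}(M(f))_n>0$ on a terminal interval, which would yield ${\rm st}(f)={\rm reg}(M(f))$ with $C$ non-free. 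Excluding this scenario is precisely the content of the theorem and requires a genuine extra input (in \cite{CMD} it comes from the interplay of ${\rm ct}(f)$, ${\rm st}(f)$ and the symmetry of the Hilbert function of $M(f)$) that your proposal does not supply.
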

\noindent
Finally, the following relation also holds \cite[p. 5]{CMD}:
\begin{equation}
{\rm reg}(M(f)) = {\rm reg}({\rm AR}(f))+d-3.
\end{equation}
Now we are ready to formulate our main observation in this section.
\begin{proposition}
Let $C = \{f=0\}\subset \mathbb{P}^{2}_{\mathbb{C}}$ be an $\mathscr{M}$-curve of degree $d$. Then
$${\rm reg}(M(f)) = \begin{cases}
3m-2 & \text{ if } d=2m\geq 6, \text{ or}\\
3m-1 & \text{ if } d=2m+1 \geq 5
\end{cases}$$
and
$${\rm reg}({\rm AR}(f)) = m+1$$
both for $d=2m\geq 6$ or $d=2m+1\geq 5$.
\end{proposition}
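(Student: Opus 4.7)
The plan is to reduce everything to computing the two exponents $(d_1,d_2)$ of the free $\mathscr{M}$-curve and then feed them into the formulas recalled just above the statement, namely $\mathrm{reg}(M(f))=\mathrm{st}(f)=d-3+d_2$ (valid since $\mathscr{M}$-curves are free, by the remark attributed to \cite{JanLes}) and $\mathrm{reg}(\mathrm{AR}(f))=\mathrm{reg}(M(f))-(d-3)$. So the entire problem boils down to pinning down $d_2$ in each parity case.

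To find the exponents, I would use the du Plessis--Wall equality from Theorem \ref{dup}(a), which holds because $C$ is free: with $r=d_1=\mathrm{mdr}(f)\leq (d-1)/2$ one has $\tau(C)=(d-1)(d-r-1)+r^2$. Combining this with the $\mathscr{M}$-curve formula for $\deg J_f=\tau(C)$ gives a quadratic in $r$. Concretely, for $d=2m$ one obtains
\begin{equation*}
r^2-(2m-1)r+(m-2)(m+1)=0,
\end{equation*}
whose roots are $r=m-2$ and $r=m+1$; only $r=m-2$ satisfies $r\leq (d-1)/2$, forcing $(d_1,d_2)=(m-2,m+1)$. For $d=2m+1$ the analogous quadratic
\begin{equation*}
r^2-2mr+(m-1)(m+1)=0
\end{equation*}
has roots $m-1$ and $m+1$, and only $r=m-1$ fits, so $(d_1,d_2)=(m-1,m+1)$.

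With these exponents in hand the rest is bookkeeping. For $d=2m$ I compute $\mathrm{st}(f)=(2m-3)+(m+1)=3m-2$, which by the displayed equality $\mathrm{reg}(M(f))=\mathrm{st}(f)$ for free curves yields the first formula, and then $\mathrm{reg}(\mathrm{AR}(f))=(3m-2)-(2m-3)=m+1$. For $d=2m+1$ I get $\mathrm{st}(f)=(2m-2)+(m+1)=3m-1$ and $\mathrm{reg}(\mathrm{AR}(f))=(3m-1)-(2m-2)=m+1$, matching the claimed uniform value in both parities.

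The only genuinely delicate point is the selection of the correct root of the quadratic: one must verify that the larger root $m+1$ is ruled out by the constraint $d_1\leq (d-1)/2$ (equivalently $d_1\leq d_2$), which is automatic from $d_1=\mathrm{mdr}(f)$ in the free case but must be invoked explicitly. Everything else is substitution into the formulas $\mathrm{reg}(M(f))=d-3+d_2$ and $\mathrm{reg}(\mathrm{AR}(f))=\mathrm{reg}(M(f))-(d-3)$, so no real obstacle is expected beyond writing out the two parity cases in parallel.
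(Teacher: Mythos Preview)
Your argument is correct and follows essentially the same route as the paper: identify the exponents $(d_1,d_2)$ of the free $\mathscr{M}$-curve and plug into $\mathrm{st}(f)=d-3+d_2=\mathrm{reg}(M(f))$ and $\mathrm{reg}(\mathrm{AR}(f))=d_2$. The only difference is that the paper simply quotes the exponents $(m-2,m+1)$ and $(m-1,m+1)$ as already established (they were used earlier, e.g.\ in the proofs of Proposition~\ref{mvp} and Theorem~\ref{poil}, and ultimately go back to \cite{JanLes}), whereas you re-derive them by solving the du Plessis--Wall quadratic; this makes your version slightly more self-contained but is not a genuinely different approach.
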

\begin{proof}
It is enough to observe that 
$${\rm reg}({\rm AR}(f)) = {\rm st}(f) - d + 3 = d_{2},$$
and the claim follows from the fact that, if $C = \{f=0\} \subset \mathbb{P}^{2}_{\mathbb{C}}$ is an $\mathscr{M}$-curve of degree $d$, then $C = \{f=0\}$ is free with exponents $(d_{1},d_{2})$, where
\begin{align*}
(d_{1},d_{2}) = (m-2, m+1) & \text{ if } d=2m\geq 6, \text{ or} \\
(d_{1},d_{2}) = (m-1, m+1) & \text{ if } d=2m+1\geq 5.
\end{align*}
This gives us ${\rm reg}({\rm AR}(f)) = m+1$, and then the values for ${\rm reg}(M(f))$ follow.
\end{proof}
\section*{Funding}
This paper is based on research conducted during an intensive research group organized by the University of the National Education Commission Krakow in Ogrodzieniec. We would like to thank The Witcher for the inspiration.

Marek Janasz and Piotr Pokora are supported by the National Science Centre (Poland) Sonata Bis Grant  \textbf{2023/50/E/ST1/00025}. For the purpose of Open Access, the authors have applied a CC-BY public copyright license to any Author Accepted Manuscript (AAM) version arising from this submission.

\bigskip
Affiliation of the authors:
\noindent
Department of Mathematics,
University of the National Education Commission Krakow,
Podchor\k{a}\.zych 2,
PL-30-084 Krak\'ow, Poland. \\
\nopagebreak
\noindent
Marek Janasz: \texttt{marek.janasz@uken.krakow.pl} \\
Piotr Pokora: \texttt{piotr.pokora@uken.krakow.pl}
\end{document}